\newtheorem{thm}{Theorem}
\newtheorem{cor}[thm]{Corollary}
\newtheorem{lem}[thm]{Lemma}
\newtheorem{prop}[thm]{Proposition}
\theoremstyle{definition}
\newtheorem{defn}[thm]{Definition}
\theoremstyle{remark}
\numberwithin{equation}{section}
\newcommand{\set}[1]{\left\{#1\right\}}
\newcommand{\To}{\longrightarrow}
\begin{document}
\setcounter{tocdepth}{1}


\title[]{Lexicographic products as compact spaces of the first Baire class}
\author{Antonio Avil\'es}
\address{Universidad de Murcia, Departamento de Matem\'{a}ticas, Campus de Espinardo 30100 Murcia, Spain.} \email{avileslo@um.es}

\author{Stevo Todorcevic}
\address{Institut de Mathématiques de Jussieu, UMR 7586, 2 place Jussieu - Case 247, 75222 Paris Cedex 05, France. Department of Mathematics, University of Toronto, Toronto, Canada, M5S 3G3.}%
\email{stevo@math.toronto.edu}
\thanks{First author supported by projects MTM2017-86182-P (AEI, Spain and ERDF/FEDER, EU) and 20797/PI/18 (Fundaci\'{o}n S\'{e}neca, ACyT Regi\'{o}n de Murcia). Second Author is partially supported by Grants from NSERC (455916) and CNRS (IMJ-PRG UMR7586).}

\subjclass[2010]{54D30,03E15,26A21,54H05}

\keywords{Rosenthal compact; first Baire class function}

\begin{abstract}
We use lexicographic products to give examples of compact spaces of first Baire class functions on a compact metric space that cannot be represented as spaces of functions with countably many discontinuities.
\end{abstract}

\maketitle

\section{Introduction}

A compact Hausdorff space can be always represented as a compact set of functions $f:X\To\mathbb{R}$ on a certain set $X$, in the pointwise topology. 

\begin{defn}
	A compact space $K$ is Rosenthal if it can be represented as a compact set of functions $f:X\To\mathbb{R}$ of the first Baire class on a Polish space $X$.
\end{defn}

We consider the following two subclasses of the class $\mathcal{R}$ of Rosenthal compacta:

\begin{itemize}
	\item $K$ belongs to the class $\mathcal{RK}$ if $K$ can be represented as a compact set of functions $f:X\To\mathbb{R}$ of the first Baire class on a compact metric space $X$.
	
	\item $K$ belongs to the class $\mathcal{CD}$ if $K$ can be represented as a compact set of functions $f:X\To\mathbb{R}$ with countably many discontinuities on a compact metric space $X$.
\end{itemize} 

 We have that $\mathcal{CD}\subset \mathcal{RK} \subset \mathcal{R}$. Let us mention that any pointwise compact set of functions with countably many discontinuities on a Borel subset $X$ of a Polish space belongs to the class $\mathcal{CD}$ \cite[Proposition 1]{MarPolRACSAM}. The class $\mathcal{CD}$ was considered by Haydon, Molt\'{o} and Orihuela \cite{HMO}, who proved that the space of continuous functions $C(K)$ admits a locally uniformly rotund renorming whenever $K$ is separable and $K\in \mathcal{CD}$. There is a nonseparable Rosenthal compactum for which $C(K)$ fails to have a LUR renorming \cite{TodLUR}, but it is unknown if such a renorming exists for all separable Rosenthal compacta. For the case considered in this note, if $K$ is a countable lexigraphic product of compact real intervals then $C(K)$ admits a LUR renorming. This is a result of Haydon, Jayne, Namioka and Rogers \cite[Theorem 3.1]{HJNR}.
 
 Although the main result was stated for the class $\mathcal{CD}$, it was left unclear in \cite{HMO} whether this class could coincide with the whole class of Rosenthal compacta. This is not the case, as Pol \cite{PolPAMS86} gave an example of compact space in $\mathcal{R}\setminus \mathcal{RK}$. This was the separable scattered compact space of height three associated with an analytic non Borel almost disjoint family of subsets of $\omega$. Another example given by Marciszewski and Pol \cite{MarPolRACSAM}
is a continuous image of the split interval, obtained by glueing back together all twin couples of a coanalytic non Borel set of points of the unit interval. We will show in this note that the classes $\mathcal{CD}$ and $\mathcal{RK}$ are also different. The lexicographic product of three closed intervals, endowed with the order topology, serves as an example. This space is nonseparable, but we will see that this kind of lexicographic products can be supplemented to create a separable Rosenthal compactum in the class $\mathcal{RK}_0$:
 
 \begin{defn}\label{defRK0}
 	$K$ belongs to the class $\mathcal{RK}_0$ if $K$ can be represented as a compact set of functions $f:X\To\mathbb{R}$ of the first Baire class on a compact metric space $X$, which is the closure of a (countable) set of continous functions on $X$.
 \end{defn}

We wrote the word \emph{countable} in brackets because if there is a dense set of continuous functions, then a countable one also exists. This is because the space of continuous functions $C_p(X)$ is hereditarily separable, since it has a countable network, cf. \cite[Theorem I.1.3]{Arkhan}. This subclass of Rosenthal compacta is the most closely connected to Banach space theory. A compact space $K$ belongs to $\mathcal{RK}_0$ if and only if it is homeomorphic to a weak$^\ast$ compact subset of $E^{\ast\ast}$ for some separable Banach space $E$ that does not contain $\ell_1$, cf. \cite{Marindex}.

Although they do not belong to the class $\mathcal{CD}$, we will see that small lexicographical products are subspaces of continuous images of compact separable spaces in $\mathcal{CD}$. We do not know if every separable Rosenthal compactum is a continuous image of a separable compact space in $\mathcal{CD}$.

Our key observation is that a compact space in the class $\mathcal{CD}$ can be mapped onto a metric space in such a way that preimage of every point is a Corson compactum. This is the topological propery that will be killed in order to check that the examples provided are not in $\mathcal{CD}$.

We may mention here that there has been a study, completed in \cite{Elekes}, of which linearly ordered spaces can be represented as functions of the first Baire class with the pointwise order. This gives a class of linear orders that is substantially larger than countable lexicographic products of intervals. It should be noted, however, that the property that we are studying here is rather different because we are looking at the topology rather than the order, and compactness is essential.

\section{Mappings onto metric spaces with small fibers}

Given a class $\mathfrak{C}$ of compact spaces, we say that a compact space $K$ is a $\mathfrak{C}$-to-one preimage of a metric space if there exists a continous function $f:K\To M$ onto a metric space $M$ such that $f^{-1}(x)\in \mathfrak{C}$ for every $x\in M$. The two-to-one preimages of metric spaces (when $\mathfrak{C}$ are the spaces of at most two points) are involved in several structural results for separable Rosenthal compacta \cite{1BC}, and some generalizations exist for finite-to-one preimages \cite{APT}. We will consider now the class of Corson compacta. A compact space $K$ is Corson if can be represented as compact subset of a power $K\subset \mathbb{R}^I$ in such a way that for every $x,y\in K$, the set $\{i\in I : x_i \neq y_i\}$ is at most countable.

\begin{prop}
	If $K$ is a compact set of functions with countably many discontinuities on a Polish space, then $K$ is a Corson-to-one preimage of a metric space.
\end{prop}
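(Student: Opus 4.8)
The plan is to take the restriction map to a countable dense subset of $X$ as the desired map onto a metric space. First I would fix a countable dense set $Q\subseteq X$, which exists since $X$ is Polish, hence separable, and consider $\phi\colon K\To\mathbb{R}^Q$ given by $\phi(f)=f|_Q$. Restriction is continuous from the pointwise topology on $K$ to the product topology on $\mathbb{R}^Q$, and $\mathbb{R}^Q$ is metrizable because $Q$ is countable; so $M:=\phi(K)$ is a compact metric space and $\phi\colon K\To M$ is a continuous surjection. It then remains only to check that the fibers of $\phi$ are Corson compacta.

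For this, fix $m\in M$ and put $F=\phi^{-1}(m)=\{f\in K: f|_Q=m\}$. Since $\{m\}$ is closed in $M$, the set $F$ is closed in $K$ and hence compact, and it sits naturally as a compact subspace of $\mathbb{R}^X$. The claim I would prove is that any two members of $F$ differ in only countably many coordinates, which is exactly the defining property of a Corson compactum when the index set is taken to be $X$ itself. To see the claim, take $f_1,f_2\in F$, let $D_i$ be the (countable, by hypothesis) set of discontinuity points of $f_i$, and suppose $x\notin D_1\cup D_2$; picking $q_n\in Q$ with $q_n\to x$ (possible as $Q$ is dense), continuity of $f_i$ at $x$ yields $f_i(x)=\lim_n f_i(q_n)$, and since $f_1$ and $f_2$ agree on $Q$ the two limits are equal, so $f_1(x)=f_2(x)$. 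Hence $\{x\in X: f_1(x)\neq f_2(x)\}\subseteq D_1\cup D_2$ is countable, and $F$ is Corson.

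I do not expect a real obstacle here: once one commits to restricting to a countable dense set, the verification is essentially forced. The only points that need care are the tension between making the target metrizable (so $Q$ must be countable) and still pinning down function values at continuity points from their restriction to $Q$ (so $Q$ must be dense), together with the observation that in the definition of a Corson compactum the index set is arbitrary, so the fiber $F\subseteq\mathbb{R}^X$ witnesses its own Corson property directly, with no further re-embedding; it is the countable discontinuity sets of the two functions that bound how much they can disagree on a single fiber.
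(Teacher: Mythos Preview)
Your proposal is correct and matches the paper's own proof essentially line for line: restrict to a countable dense set $Q\subset X$, note that the target $\mathbb{R}^Q$ is metrizable, and observe that two functions agreeing on $Q$ must agree at every common continuity point, so any two members of a fiber differ on a countable set and the fiber is Corson. The only cosmetic difference is that the paper fixes one element $f$ of the fiber and compares everything to it, while you phrase it pairwise; the content is identical.
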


\begin{proof}
	Let $K$ be a pointwise compact set of functions $f:X\To \mathbb{R}$, with $X$ a Polish space. Fix a countable dense set $D\subset X$. The restriction map $r:K \To \mathbb{R}^D$ gives a continuous map into a metric space. If $r(f) = r(g)$, then $f$ and $g$ coincide on the points of common continuity of $f$ and $g$, since they coincide on $D$. Thus, all functions in a given fiber $F = r^{-1}(r(f))$ coincide with $f$ in all but countably many points. This implies that each fiber is a Corson compactum.
\end{proof}

\section{lexicographic order and first Baire class functions}

Let us fix a countable ordinal $\alpha<\omega_1$. We consider the countable power $\{0,1\}^\alpha$, which is a compact metrizable space when endowed with the product topology. We also consider the lexicographic order $\prec$ on it, $x\prec y$ when $x_\beta<y_\beta$ for the first ordinal $\beta$ where $x_\beta\neq y_\beta$. Define

$$\mathcal{H}_\alpha = \left\{f:\{0,1\}^\alpha\To [0,1] :  x\prec y \Rightarrow f(x)\leq f(y) \right\}. $$

\begin{lem}\label{Halpha}
	$\mathcal{H}_\alpha$ is a Rosenthal compactum. Moreover, $\mathcal{H}_\alpha$ is a subspace of a Rosenthal compactum of the class $\mathcal{RK}_0$.
\end{lem}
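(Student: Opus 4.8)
The plan is to establish the two assertions in turn. \emph{$\mathcal{H}_\alpha$ is a Rosenthal compactum:} first, $\mathcal{H}_\alpha$ is pointwise compact, because the condition ``$x\prec y\Rightarrow f(x)\le f(y)$'' is preserved under pointwise limits, so $\mathcal{H}_\alpha$ is closed in the compact cube $[0,1]^{\{0,1\}^\alpha}$. Since $\{0,1\}^\alpha$ with the product topology is compact metrizable, hence Polish, it remains to see that each $f\in\mathcal{H}_\alpha$ is of the first Baire class. I would do this by showing $f^{-1}(U)$ is $F_\sigma$ for open $U\subseteq\mathbb R$; writing $U$ as a countable union of intervals reduces this to showing that $\{x:f(x)<t\}$ and $\{x:f(x)>t\}$ are $F_\sigma$ for every $t$. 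By monotonicity these are a $\prec$-initial and a $\prec$-final segment, so everything comes down to the order-theoretic observation that, because $\alpha$ is countable, every initial segment of the lexicographic order on $\{0,1\}^\alpha$ is $F_\sigma$: for instance
$$\{x:x\prec w\}=\bigcup\Big\{\,\{x:x_\beta=0\}\cap\bigcap_{\gamma<\beta}\{x:x_\gamma=w_\gamma\}\ :\ \beta<\alpha,\ w_\beta=1\,\Big\},$$
a countable union of closed sets; the cases $\{x:x\preceq w\}$ and the final segments are symmetric, and a general initial segment has one of these two forms for its supremum since the lexicographic order on $\{0,1\}^\alpha$ is complete. Hence $\mathcal{H}_\alpha$ is a compact set of first Baire class functions on a Polish space, i.e.\ a member of $\mathcal{R}$ — in fact of $\mathcal{RK}$, as $\{0,1\}^\alpha$ is compact metric.

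\emph{$\mathcal{H}_\alpha$ embeds in a member of $\mathcal{RK}_0$:} the class $\mathcal{RK}_0$ is closed under closed subspaces and under countable products, which is immediate from the characterization recalled in the introduction (a closed subset of, resp.\ a countable $c_0$-product of, weak$^\ast$-compact subsets of $E^{\ast\ast}$ with $E$ separable and not containing $\ell_1$ is again of this type). So it suffices to reduce $\mathcal{H}_\alpha$ to spaces of this kind. The base case is $\mathcal{H}_\omega\in\mathcal{RK}_0$: the countable set $C$ of continuous $\prec$-monotone functions on $\{0,1\}^\omega$ depending on finitely many coordinates and taking finitely many rational values is pointwise dense in $\mathcal{H}_\omega$ — given $x_1\prec\cdots\prec x_k$ one picks $N$ so large that the restrictions $x_i\restriction N$ are already in this strict order, and interpolates the prescribed non-decreasing values by a monotone function of the first $N$ coordinates — while $\overline C\subseteq\mathcal{H}_\omega$ because monotonicity is closed; so $\overline C=\mathcal{H}_\omega$ is the closure of a countable set of continuous functions on a compact metric space. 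For a successor ordinal one uses that $\mathcal{H}_{\beta+1}$ is homeomorphic to the closed subspace $\{(g_0,g_1)\in\mathcal{H}_\beta\times\mathcal{H}_\beta: g_0\le g_1\le g_0^{+}\}$ of $\mathcal{H}_\beta^2$, via $g\mapsto(g_0,g_1)$ with $g_i(u)=g(u,i)$ under the identification $\{0,1\}^{\beta+1}=\{0,1\}^\beta\times\{0,1\}$, where $g_0^{+}(u)=\inf_{u'\succ u}g_0(u')$; iterating, $\mathcal{H}_{\beta+n}$ embeds in $\mathcal{H}_\beta^{2^n}$.

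This reduces matters to limit ordinals, and that is the main obstacle. Writing $\lambda=\sup_n\lambda_n$ with $\lambda_n<\lambda$ increasing, the ``obvious'' continuous map $g\mapsto\big(g(u\frown 0^{[\lambda_n,\lambda)}),\,g(u\frown 1^{[\lambda_n,\lambda)})\big)_n$ from $\mathcal{H}_\lambda$ into $\prod_n\mathcal{H}_{\lambda_n}^2$ is \emph{not} injective: it cannot detect the value of $g$ at a point $z$ that is not eventually constant when $g$ has a left jump there, and this loss of information is exactly what makes $\mathcal{H}_\lambda$ nonseparable (already $\mathcal{H}_{\omega+1}$ is nonseparable, as one sees by testing a putative countable dense set against the uncountably many jump pairs $s\frown 0\prec s\frown 1$, $s\in\{0,1\}^\omega$). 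One therefore needs a subtler presentation of $\mathcal{H}_\lambda$: either as a space of monotone functions on a carefully chosen compact metric domain refining $\{0,1\}^\lambda$ that carries enough continuous monotone functions, or as a closed subspace of a more elaborate countable product built from copies of $\mathcal{H}_\omega$ (which, being itself nonmetrizable, can encode more than a metrizable factor). Verifying that such a construction is simultaneously large enough to contain a homeomorphic copy of $\mathcal{H}_\lambda$ and still a Rosenthal compactum — which, once it is realized as the closure of countably many continuous functions on a compact metric space, is all that remains for membership in $\mathcal{RK}_0$ — is the technical heart, and I expect it to be the hardest step.
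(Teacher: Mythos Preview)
Your argument for the first assertion --- that $\mathcal{H}_\alpha$ is a pointwise-closed set of Baire-1 functions on the compact metric space $\{0,1\}^\alpha$ --- is correct and matches the paper's proof essentially line for line. The gap is in the second assertion, and you have identified it yourself: your inductive scheme breaks down at limit ordinals above $\omega$, and you do not complete the argument there. The paper avoids induction entirely and gives a direct, uniform construction valid for every countable $\alpha$ at once --- and that construction is precisely the natural generalization of your own base case. Write $\alpha = \bigcup_n A_n$ as an increasing union of \emph{finite} subsets (not initial segments; this is the whole point), and let $\mathcal{D}_n$ be the set of functions $f:\{0,1\}^\alpha \to [0,1]$ satisfying $x|_{A_n} \preceq y|_{A_n} \Rightarrow f(x) \le f(y)$. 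Each such $f$ depends only on the coordinates in $A_n$ and is therefore continuous; but since $A_n$ need not be an initial segment of $\alpha$, these functions are typically \emph{not} in $\mathcal{H}_\alpha$. One sets $\mathcal{D} = \bigcup_n \mathcal{D}_n$ and checks two things: every pointwise cluster point of $\mathcal{D}$ is either in some $\mathcal{D}_n$ (hence continuous) or in $\mathcal{H}_\alpha$ (hence Baire-1 by the first part); and $\mathcal{H}_\alpha \subset \overline{\mathcal{D}}$ via the same finite-interpolation argument you used for $\alpha=\omega$, choosing $n$ so that $A_n$ contains the first coordinate of disagreement of each pair from the given finite test set. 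Then $\overline{\mathcal{D}}$ is the desired member of $\mathcal{RK}_0$ containing $\mathcal{H}_\alpha$.

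The conceptual step you were missing is that the approximating continuous functions need not lie inside $\mathcal{H}_\alpha$; one only needs their pointwise closure to consist of Baire-1 functions and to contain $\mathcal{H}_\alpha$. Once the finite coordinate sets $A_n$ are allowed to be cofinal in $\alpha$ rather than initial segments, your base-case argument carries over verbatim, and no separate treatment of successor or limit ordinals is required. This also renders your appeal to closure of $\mathcal{RK}_0$ under closed subspaces and countable products unnecessary: the lemma only asks for an embedding into a member of $\mathcal{RK}_0$, and $\overline{\mathcal{D}}$ provides one directly.
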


\begin{proof}
		It is clear that $\mathcal{H}_\alpha$ is a  pointwise closed set of functions. If $f\not\in\mathcal{H}_\alpha$, then there exist $x\prec y$ such that $f(y)< f(x)$; taking $f(y)<r<f(x)$, the set $\{g : g(y)<r<g(x)\}$ is a neihborhood of $f$ that contains no element of $\mathcal{H}_\alpha$. We check now that each $f\in\mathcal{H}_\alpha$ is of the first Baire class. It is enough to see that $J = f^{-1}(I)$ is an $F_\sigma$ set for every open interval $I\subset [0,1]$. The set $J$ is an interval of the order $\prec$, in the sense that if $a<b<c$ and $a,c\in J$, then $b\in J$. The lexicographic order of $\{0,1\}^\alpha$ is complete, and therefore every interval can be written in the form $[a,b]$, $[a,b)$, $(a,b]$ or $(a,b)$, where $a$ and $b$ are the infimum and the supremum of $J$. We will prove that open intervals $J=(a,b)$ are $F_\sigma$, and therefore all intervals are $F_\sigma$. We have that
	\begin{eqnarray*}
		\{x : x\prec b\} &=& \bigcup_{\beta<\alpha}\{x : x_\beta =0, b_\beta = 1, x_\gamma=b_\gamma \text{ for }\gamma<\beta \}
	\end{eqnarray*}
	is a countable union of closed sets. Similarly, $\{x: a\prec x\}$ is an $F_\sigma$ as well, and therefore $(a,b)$, the intersection of the two previous sets, is $F_\sigma$ as well.
	
	This completes the proof that $\mathcal{H}_\alpha$ is a Rosenthal compactum. We have to find now a subset $\mathcal{D}$ of $[0,1]$-valued continuous functions on $\{0,1\}^\alpha$ whose pointwise closure is made of functions of the first Baire class, and contains $\mathcal{H}_\alpha$. Since $\alpha$ is countable, it can be written as the union of an increasing $\omega$-chain of finite sets: We write $\alpha = \bigcup_{n<\omega} A_n$ with $A_0\subset A_1\subset \cdots$ and each $A_i$ a finite set. For each $n<\omega$, let
	$$\mathcal{D}_n = \set{f:\{0,1\}^\alpha\To [0,1] :  x|_{A_n}\preceq y|_{A_n} \Rightarrow f(x)\leq f(y)}.$$
	Here $z|_{A_n} = (z_\beta)_{\beta\in A_n} \in \{0,1\}^{A_n}$ denotes the restriction to the cube $\{0,1\}^{A_n}$, that is viewed with the corresponding lexicographic order. The set $\mathcal{D} = \bigcup_{n<\omega}\mathcal{D}_n$ is the one we are looking for. First notice that a function $f\in\mathcal{D}_n$ satisfies that $x|_{A_n}= y|_{A_n} \Rightarrow f(x) = f(y)$, and this implies that  $f$ is continuous.  Now we prove that every function in the pointwise closure of $\mathcal{D}$ is of the first Baire class. Every such function $g$ is the limit through an ultrafilter $\mathcal{U}$ of $\mathcal{D}$. If there exists $n$ such that $\mathcal{D}_n\in\mathcal{U}$, then $g:\{0,1\}^\alpha\To [0,1]$ would keep the property that $x|_{A_n}\preceq y|_{A_n} \Rightarrow f(x)\leq f(y)$, and we noted that this implies continuity. The other possibility is that $\mathcal{D}_n\not\in\mathcal{U}$ for all $n$. We prove that in this case $g\in\mathcal{H}_\alpha$. Suppose that $x\prec y$, so that there is $\beta<\alpha$ such that $x_\beta = 0 <1 = y_\beta$, while $x_\gamma = y_\gamma$ for $\gamma<\beta$. Find $n$ such that $\beta\in A_n$. Notice that $x|_{A_m}\prec y|_{A_m}$ for all $m\geq n$. Hence $f(x)\leq f(y)$ whenever $f\in \bigcup_{m\geq n}D_m \in\mathcal{U}$. This property would be preserved in the limit, so $g(x)\leq g(y)$, and this shows that $g\in\mathcal{H}_\alpha$. The only thing that remains to be proven is that $\mathcal{H}_\alpha \subset \overline{\mathcal{D}}$. For this, fix an element $f\in\mathcal{H}_\alpha$, and a basic neighborhood of $f$ of the form $$W = \{g:\{0,1\}^\alpha\To \mathbb{R} : |g(x)-f(x)|<\varepsilon\text{ for all }x\in F \}$$
	where $\varepsilon>0$ and $F$ is a finite subset of $\{0,1\}^\alpha$. For each different $x,y\in F$ let $\gamma(x,y)$ be the least ordinal $\gamma<\alpha$ such that $x_\gamma\neq y_\gamma$. Find $n<\omega$ such that $\gamma(x,y)\in A_n$ whenever $x,y\in F$, $x\neq y$. In this way we have that, for $x,y\in F$, $x\prec y$ if and only if $x|_{A_n} \prec y|_{A_n}$. We define $g:\{0,1\}^\alpha\To[0,1]$ as
	$$g(x) = f\left(\min\{y\in F : x|_{A_n}\preceq y|_{A_n} \}\right).$$ 
	We have that $f(x)=g(x)$ when $x\in F$, and $x|_{A_n}\prec y|_{A_n}$ always implies that $g(x)\leq g(y)$. So $g\in W\cap \mathcal{D}_n$ as desired.
\end{proof}

We denote as $([0,1]^\beta,\tau_{\prec})$ the compact space $[0,1]^\beta$ when endowed with the order topology of the lexicographic product order $\prec$.

\begin{prop}\label{lexprodis}
	For every countable ordinal $\beta<\omega_1$, the compact space $([0,1]^\beta,\tau_{\prec})$ is a Rosenthal compactum that is a subspace of a Rosenthal compactum in the class $\mathcal{RK}_0$. 
\end{prop}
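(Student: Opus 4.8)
The plan is to reduce the statement about $([0,1]^\beta,\tau_\prec)$ to Lemma~\ref{Halpha} by exhibiting $([0,1]^\beta,\tau_\prec)$ as a subspace (or continuous preimage/quotient-compatible copy) of some $\mathcal{H}_\alpha$, where $\alpha$ is a suitably chosen countable ordinal built from $\beta$. The natural idea is that a point $t\in[0,1]$ can be encoded by its binary expansion, a member of $\{0,1\}^\omega$, so a point of $[0,1]^\beta$ should correspond to a member of $\{0,1\}^{\omega\cdot\beta}$, and the lexicographic order on $[0,1]^\beta$ should match the lexicographic order on $\{0,1\}^{\omega\cdot\beta}$ under this encoding. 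So I would set $\alpha=\omega\cdot\beta$ (ordinal multiplication), which is again a countable ordinal, and try to realize $([0,1]^\beta,\tau_\prec)$ inside $\mathcal{H}_{\omega\cdot\beta}$.

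The key steps, in order: (1) Fix the order-preserving surjection $q:\{0,1\}^\omega\To[0,1]$ sending a $0$-$1$ sequence to the real with that binary expansion; this is continuous, monotone for the lexicographic order, and identifies exactly the pairs of ``twin'' sequences (eventually-$0$ versus eventually-$1$ tails). (2) Take the $\beta$-fold product $Q:\{0,1\}^{\omega\cdot\beta}\To[0,1]^\beta$, viewing $\{0,1\}^{\omega\cdot\beta}$ as $(\{0,1\}^\omega)^\beta$; check that $Q$ is continuous, surjective, and order-preserving from the lexicographic order on $\{0,1\}^{\omega\cdot\beta}$ onto the lexicographic order $\prec$ on $[0,1]^\beta$ — here I would spell out that the lexicographic order on a product of lexicographic orders, indexed lexicographically, is again lexicographic, which is exactly the ordinal-arithmetic identity $\omega\cdot\beta$ being the concatenation of $\beta$ copies of $\omega$. (3) Because $Q$ is a continuous order-preserving surjection between complete linear orders, it induces a continuous surjection between the corresponding order-topological spaces, and composing with restriction, the dual map $Q^\ast$ sends a function $f\in\mathcal{H}_\beta':=\{f:[0,1]^\beta\To[0,1]: x\prec y\Rightarrow f(x)\le f(y)\}$ to $f\circ Q\in\mathcal{H}_{\omega\cdot\beta}$. (4) Identify $([0,1]^\beta,\tau_\prec)$ with a closed subspace of $\mathcal{H}_\beta'$ via evaluation — more precisely, recall the standard fact that a compact linearly ordered space $L$ embeds in the space of monotone $[0,1]$-valued functions on $L$ through the maps $\ell\mapsto(\text{some fixed dense family of monotone functions evaluated at }\ell)$, so it suffices to realize $\mathcal{H}_\beta'$ itself, and hence its subspace $([0,1]^\beta,\tau_\prec)$, inside $\mathcal{H}_{\omega\cdot\beta}$ via $Q^\ast$; one checks $Q^\ast$ is a homeomorphism onto its image because $Q$ is a quotient map. (5) Conclude by Lemma~\ref{Halpha} that $\mathcal{H}_{\omega\cdot\beta}$, hence its subspace, is a Rosenthal compactum sitting inside one of class $\mathcal{RK}_0$, and that this property passes to subspaces.

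I expect the main obstacle to be step (2)–(4): matching the order topology on $([0,1]^\beta,\tau_\prec)$ with the subspace topology it inherits as a set of monotone functions, across the coordinatewise binary encoding. The subtlety is that $Q$ is not injective (twin points collapse), so one must verify that $Q^\ast$ is nonetheless a homeomorphic embedding on the relevant function space and, crucially, that the image is contained in $\mathcal{H}_{\omega\cdot\beta}$ rather than merely in its closure — i.e. that $f\circ Q$ is genuinely monotone for the lexicographic order on $\{0,1\}^{\omega\cdot\beta}$, which follows from $Q$ being order-preserving but needs the order-arithmetic bookkeeping done carefully. An alternative, possibly cleaner, route that avoids choosing $\alpha=\omega\cdot\beta$ explicitly is to observe directly that $([0,1]^\beta,\tau_\prec)$ embeds order-homeomorphically into $\{0,1\}^\gamma$ with its lexicographic order topology for some countable $\gamma$ (every separable complete linear order of countable ``weight'' does), and then rerun the argument of Lemma~\ref{Halpha} verbatim with $[0,1]^\beta$ in place of $\{0,1\}^\alpha$; the proof of Lemma~\ref{Halpha} only used that the order is complete, the space is compact metrizable, and initial/final segments determined by a point are $F_\sigma$, all of which hold here, so in fact one may simply invoke the same construction. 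I would present the $\alpha=\omega\cdot\beta$ encoding as the main line since it makes the reduction to the already-proved lemma most transparent.
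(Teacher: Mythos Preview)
Your overall strategy---encode $[0,1]$ in binary and thereby reduce $([0,1]^\beta,\tau_\prec)$ to the space $\mathcal{H}_\gamma$ of lexicographically monotone functions on some $\{0,1\}^\gamma$---is exactly the paper's strategy. The execution, however, diverges at your step~(4), and that is where the real gap lies.

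You want to embed $L=([0,1]^\beta,\tau_\prec)$ into the space $\mathcal{H}_\beta'$ of monotone $[0,1]$-valued functions on $L$ itself, and then push forward by $Q^\ast$. But the ``standard fact'' you invoke does not give a map into $\mathcal{H}_\beta'$: sending $\ell$ to the tuple $(f_i(\ell))_i$ of evaluations of a separating family of monotone functions lands in a cube $[0,1]^I$, not in $\mathcal{H}_\beta'$. The natural candidate for a map into $\mathcal{H}_\beta'$, namely $x\mapsto \chi_{\{y:\,y\succeq x\}}$ (or any $\{0,1\}$-valued step function jumping at $x$), is \emph{not} continuous for the order topology on $L$: if $x^{(n)}\uparrow (t,0,0,\ldots)$ with first coordinate strictly below $t$, the step functions converge pointwise to the step function for $(t,1,1,\ldots)$, not for $(t,0,0,\ldots)$. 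So step~(4) as written does not go through, and you correctly flag it as the main obstacle without resolving it.

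The paper's fix is precisely to avoid embedding $L$ into monotone functions on $L$. Instead it splits into cases. For a successor $\beta=\alpha+1$ it encodes only the first $\alpha$ coordinates via $\varpi:\{0,1\}^{\alpha\cdot\omega}\to[0,1]^\alpha$ and uses the \emph{last} coordinate $x_\alpha$ as the value at the jump:
\[
\phi(x)(y)=\begin{cases}0 & \varpi(y)\prec x|_\alpha,\\ x_\alpha & \varpi(y)=x|_\alpha,\\ 1 & \varpi(y)\succ x|_\alpha.\end{cases}
\]
This extra degree of freedom is exactly what repairs the discontinuity of the naive step-function map. For limit $\beta$ the paper embeds $([0,1]^\beta,\tau_\prec)$ into the \emph{product} $\prod_{\gamma<\beta}([0,1]^{\gamma+1},\tau_\prec)$ via restrictions $x\mapsto(x|_{\gamma+1})_\gamma$, and then uses that $\mathcal{RK}_0$ is closed under countable products. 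Your uniform treatment of all $\beta$ misses both of these moves.

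Finally, your alternative route contains a false claim: $([0,1]^\beta,\tau_\prec)$ does \emph{not} order-homeomorphically embed into any $(\{0,1\}^\gamma,\tau_\prec)$, since the former is connected (for $\beta\geq 1$) while the latter is totally disconnected. The second half of your alternative (rerun Lemma~\ref{Halpha} with domain $[0,1]^\beta$ in its product topology) is more promising, but would require reproving the $\mathcal{RK}_0$ part from scratch rather than reducing to the lemma already established.
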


\begin{proof}
 First we consider the case in which $\beta=\alpha+1$ is a successor ordinal. We consider the ordinal product $$\alpha\cdot\omega = \set{ \gamma\cdot\omega+n : \gamma<\alpha, n<\omega }.$$
	Let $\pi:2^\omega\To [0,1]$ be the standard surjection $\pi(x) = \sum_{n<\omega} 2^{-n-1}x_n$, and
	 $$\varpi:\{0,1\}^{\alpha\cdot\omega} \To [0,1]^\alpha$$
	 given by $$\varpi(x) = (\pi(x_{\gamma\cdot\omega+n})_{n<\omega})_{\gamma<\alpha}.$$
	 This function is nondecreasing with respect to the lexicographic orders and continuous with respect to the induced order topologies. Now we define $\phi:[0,1]^{\alpha+1}\To \mathcal{H}_{\alpha\cdot\omega}$ by 
	 
	 $$\phi(x)(y) = \begin{cases} 
	 0 &\text{ if } \varpi(y)\prec x|_\alpha \\
	 x_\alpha &\text{ if } \varpi(y)= x|_\alpha\\ 
	 1 &\text{ if } \varpi(y)\succ x|_\alpha. 
	 \end{cases}$$
	 This is a homeomorphic embedding of $([0,1]^{\alpha+1},\tau_\prec)$ inside $\mathcal{H}_{\alpha\cdot\omega}$.
	 If $\beta$ is a limit ordinal, then we can embed $([0,1]^\beta,\tau_{\prec})$ inside $\prod_{\gamma<\beta} ([0,1]^{\gamma+1},\tau_{\prec})$. The product over $\gamma<\beta$ is now endowed with the product topology, the embedding is just given by restrictions $x\mapsto (x|_{\gamma+1})_{\gamma<\beta}$. The class $\mathcal{RK}_0$ is closed under countable products.
\end{proof}

\begin{lem}
	Let $L_1$ and $L_2$ be two complete linear orders, and $L_1\times L_2$ its lexicographic product, endowed with the order topology. If $L_1$ is uncountable and $f:L_1\times L_2\To M$ is a continuous function onto a metric space, then there exists $x\in M$ such that $f^{-1}(x)$ contains a copy of $L_2$.
\end{lem}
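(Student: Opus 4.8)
The plan is to exploit the fact that a continuous map into a metric space ``sees'' only countably many values, and that $L_1\times L_2$ is built from uncountably many disjoint ``vertical'' copies of $L_2$ glued along a copy of $L_1$. For $a\in L_1$ write $V_a=\{a\}\times L_2$ for the corresponding vertical fiber; each $V_a$, with the subspace topology, is order-isomorphic and homeomorphic to $L_2$ with its order topology, because between two points of $V_a$ the lexicographic order agrees with the order of $L_2$ and $V_a$ is an order-convex subset of $L_1\times L_2$ except possibly at its endpoints, which does not affect the existence of an embedded copy. So it suffices to find a single point $x\in M$ with $V_a\subset f^{-1}(x)$ for some $a\in L_1$.

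First I would fix a compatible metric $d$ on $M$ and, for each $n$, consider the set $B_n=\{a\in L_1:\operatorname{diam} f(V_a)\geq 1/n\}$. If every $B_n$ were nonempty I would be done easily only in the reverse direction, so instead suppose for contradiction that for \emph{every} $a\in L_1$ the set $f(V_a)$ is not a single point; then $L_1=\bigcup_n B_n$, and since $L_1$ is uncountable, some $B_N$ is uncountable. Fix such an $N$ and set $\eps=1/N$. For each $a\in B_N$ pick points $p_a\prec q_a$ in $V_a$ with $d(f(p_a),f(q_a))\geq\eps$. The key step is now a ``jump'' argument: because $L_1\times L_2$ is a complete linear order, between the pair $(p_a,q_a)$ and, going up in the $L_1$-coordinate, the pair for the next chosen point, continuity of $f$ forces infinitely many well-separated oscillations to accumulate at a single point of $L_1\times L_2$, contradicting continuity of $f$ there.

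More precisely, here is the mechanism I expect to use. Enumerate (by transfinite recursion along the order of $L_1$) an increasing $\omega_1$-sequence $a_0\prec a_1\prec\cdots\prec a_\xi\prec\cdots$ inside $B_N$; this is possible since $B_N$ is uncountable and every uncountable linear order contains either an increasing or a decreasing $\omega_1$-sequence (and a decreasing one is handled symmetrically). Let $t=\sup_\xi a_\xi$ taken in the complete order $L_1$, and consider the point $z=(t,\min L_2)$ (or a suitable point in $V_t$); it is a limit of the points $q_{a_\xi}$ from below. Continuity of $f$ at $z$ gives a neighborhood $U$ of $z$ in the order topology with $\operatorname{diam} f(U)<\eps$. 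But any order-neighborhood of $z$ contains a tail interval $(w,z]$ which, for large $\xi$, contains \emph{both} $p_{a_\xi}$ and $q_{a_\xi}$ (since $a_\xi\to t$ in $L_1$ means the whole vertical fibers $V_{a_\xi}$ eventually lie in any such interval below $z$), whence $d(f(p_{a_\xi}),f(q_{a_\xi}))<\eps$, contradicting the choice of $p_{a_\xi},q_{a_\xi}$. Therefore some $V_a$ has $f(V_a)=\{x\}$, and $f^{-1}(x)\supset V_a\cong L_2$.

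The main obstacle I anticipate is the order-topological bookkeeping at the limit stage: one must make sure that an order-neighborhood of $z=(t,\cdot)$ in the \emph{lexicographic} product really swallows entire fibers $V_{a_\xi}$ for cofinally many $\xi$, and that $z$ can be chosen so that it is genuinely a two-sided or one-sided limit of the relevant points (this is where completeness of $L_1$ and $L_2$ is used, to guarantee the supremum $t$ exists and to locate an appropriate point of $V_t$). Handling the case where $B_N$ yields only a \emph{decreasing} $\omega_1$-chain, and the degenerate possibility that $t$ is attained (i.e.\ $t=a_\xi$ for some $\xi$, which cannot happen for a strictly increasing $\omega_1$-sequence but must be excluded cleanly), are the remaining routine points.
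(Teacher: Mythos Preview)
Your oscillation argument is a genuinely different route from the paper's, but it contains one false step that must be repaired. You claim that ``every uncountable linear order contains either an increasing or a decreasing $\omega_1$-sequence''. This is not a theorem of ZFC: Aronszajn lines are uncountable linear orders containing no uncountable well-ordered or reverse-well-ordered subset, and the Dedekind completion of such a line is a complete linear order still containing it. So nothing prevents $B_N$ from being an Aronszajn suborder of $L_1$, in which case your transfinite recursion cannot reach stage $\omega_1$.

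The fix is painless: you never needed length $\omega_1$. An ordinary strictly monotone $\omega$-sequence in $B_N$ suffices, and every infinite linear order has one. If $(a_n)_{n<\omega}$ is strictly increasing in $B_N$ and $t=\sup_n a_n\in L_1$ (which exists by completeness and is not attained), then, exactly as you wrote, every order-neighborhood of $z=(t,\min L_2)$ has left endpoint with first coordinate strictly below $t$ and therefore swallows the entire fiber $V_{a_n}$ for all large $n$; this contradicts $\operatorname{diam} f(V_{a_n})\geq\eps$. With this correction your plan is sound. For comparison, the paper argues by counting rather than by convergence: it pulls back a countable base of open $F_\sigma$ sets of $M$, refines to a countable family of open intervals $(a_n,b_n)$ that still separates the fibers of $f$, and then picks $t\in L_1$ avoiding the countably many first coordinates of the $a_n,b_n$; no interval can then separate two points of $V_t$, so $f$ is constant on $V_t$. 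The paper's argument is shorter and sidesteps any limit-point analysis, while yours is a direct continuity argument that does not need the $F_\sigma$ refinement step.
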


\begin{proof}
	Since $M$ is a compact metric space, it has a countable basis of open $F_\sigma$ sets. Taking preimages, there is a countable family $\mathcal{F}$ of open $F_\sigma$ sets in $L_1\times L_2$ such that if $f(x)\neq f(y)$ then $x$ and $y$ are separated by elements of $\mathcal{F}$. Since open intervals $(a,b)$ form a basis for the topology of $L_1\times L_2$, there is also a countable family $\{(a_n,b_n) : n<\omega\}$ of open intervals such that if $f(x)\neq f(y)$ then $x$ and $y$ are separated by these intervals. Since $L_1$ is uncountable, there exists $t\in L_1$ that is not the first coordinate of any $a_n$ or $b_n$. Then $f(t,s)=f(t,s')$ for all $s,s'\in L_2$, and this provides a copy of $L_2$ inside the preimage of a point.
\end{proof}

\begin{cor}
	Let $L_1$ and $L_2$ be two complete linear orders, and $L_1\times L_2$ its lexicographic product, endowed with the order topology. If $L_1$ is uncountable and $L_2$ is not metrizable, then $L_1\times L_2$ is not a Corson-to-one preimage of a metric space.
\end{cor}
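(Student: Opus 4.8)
The plan is to derive the corollary from the preceding lemma together with the proposition of Section 2 (or rather its contrapositive). Suppose, for contradiction, that $L_1\times L_2$ is a Corson-to-one preimage of a metric space, witnessed by a continuous surjection $f:L_1\times L_2\To M$ onto a metric space $M$ with each fiber $f^{-1}(x)$ a Corson compactum. Since $L_1$ is uncountable, the Lemma applies and produces a point $x\in M$ such that $f^{-1}(x)$ contains a homeomorphic copy of $L_2$. But $f^{-1}(x)$ is Corson, and Corson compacta are closed under taking closed subspaces; a copy of $L_2$ inside $f^{-1}(x)$ is automatically closed since $L_2$ is compact (being a complete linear order, hence compact in its order topology) and $L_1\times L_2$ is Hausdorff. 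Hence $L_2$ would itself be a Corson compactum.

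The remaining step is to observe that a Corson compactum which is linearly orderable (in the sense that its topology is the order topology of a complete linear order) must be metrizable. This is where the hypothesis ``$L_2$ is not metrizable'' gives the contradiction. The cleanest route is to invoke the classical fact that a linearly ordered compact space is Corson if and only if it is metrizable: indeed, a Corson compactum is \emph{Fr\'echet--Urysohn} and has \emph{countable tightness}, and for linearly ordered compacta countable tightness already forces hereditary separability-type constraints; more directly, a non-metrizable compact linearly ordered space contains either an uncountable family of pairwise disjoint open intervals or a copy of a ``long'' order such as $\omega_1+1$, neither of which embeds into a Corson compactum (the former because Corson compacta are ccc-free in the sense that they cannot contain uncountable discrete families while remaining Corson only in the separable case — more safely, one cites that an uncountable family of disjoint open sets contradicts the Corson property only together with some chain condition). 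To keep the argument self-contained I would instead use: every Corson compactum is monolithic and Fr\'echet--Urysohn, and a Fr\'echet--Urysohn compact linearly ordered space is first countable, hence (being also, as a Corson compactum, having a point-countable $T_0$ separating family) metrizable.

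The main obstacle is pinning down precisely which standard property of Corson compacta to use so that the passage ``$L_2$ Corson $\Rightarrow$ $L_2$ metrizable'' is both correct and citable without a long detour. I expect the right statement to cite is that a compact linearly ordered space is Corson if and only if it is second countable (equivalently metrizable); this is folklore and follows, e.g., from the fact that Corson compacta have a $G_\delta$-diagonal exactly when metrizable, combined with the order structure, or from Nakhmanson-type results on $C_p$-theory of ordered spaces. Once that lemma is in hand, the corollary is immediate: $L_2$ is not metrizable by hypothesis, so it is not Corson, so no fiber can contain a copy of it, contradicting the Lemma; therefore no such $f$ exists and $L_1\times L_2$ is not a Corson-to-one preimage of a metric space.

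Combining this corollary with Proposition~\ref{lexprodis} and the Proposition of Section 2 then yields the advertised examples: $([0,1]^3,\tau_\prec) = ([0,1]^2)\times[0,1]$ has uncountable first factor and non-metrizable second factor $([0,1]^2,\tau_\prec)$, hence is not a Corson-to-one preimage of a metric space, hence (contrapositive of the Section 2 Proposition, together with the fact that any compact space in $\mathcal{CD}$ is such a preimage, as remarked in the introduction) does not belong to $\mathcal{CD}$, while Proposition~\ref{lexprodis} places it inside a member of $\mathcal{RK}_0$.
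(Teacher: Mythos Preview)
Your core argument is exactly the paper's: apply the preceding lemma to land a copy of $L_2$ inside some fiber, note that closed subspaces of Corson compacta are Corson, and finish via the fact that a linearly ordered Corson compactum is metrizable. The paper dispatches that last fact in one line by citing Efimov--\v{C}ertanov~\cite{CorsonLO}; you should do the same and delete the various attempted self-contained justifications, several of which are off or insufficient (e.g., first countability alone cannot close the argument, since the split interval is a first-countable compact linear order that is not metrizable; and Corson compacta need not satisfy the ccc). The final paragraph about $([0,1]^3,\tau_\prec)$ is downstream commentary rather than part of the proof of this corollary.
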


\begin{proof}
	It follows from the previous lemma, just remember that a linearly ordered Corson compactum is metrizable \cite{CorsonLO}.
\end{proof}

The split interval and the split Cantor set, i.e. the lexicographic products $[0,1]\times\{0,1\}$ and $\{0,1\}^{\omega+1}$, are not metrizable in the order topology, therefore: 

\begin{cor}
	$(\{0,1\}^{\omega\cdot 2+1},\tau_{\prec})$ and $([0,1]^2\times\{0,1\},\tau_{\prec})$ are not Corson-to-one preimages of metric compacta.
\end{cor}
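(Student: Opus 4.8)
The plan is to apply the previous corollary to each of the two lexicographic products, so the work reduces to (a) writing each space as a lexicographic product $L_1\times L_2$ of two complete linear orders with $L_1$ uncountable and $L_2$ not metrizable, and (b) invoking the quoted fact that a nonmetrizable linearly ordered compactum is already not Corson, hence certainly not a Corson-to-one preimage of a metric space in the trivial (identity) case — but more to the point, the corollary already packages exactly this.

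First I would treat $(\{0,1\}^{\omega\cdot 2+1},\tau_\prec)$. The ordinal $\omega\cdot 2+1$ decomposes as $\omega + (\omega+1)$, and the lexicographic product $\{0,1\}^{\omega\cdot 2+1}$ with the order topology is (order-isomorphic to, hence homeomorphic to) the lexicographic product $L_1\times L_2$ where $L_1 = \{0,1\}^{\omega}$ with the lexicographic order — which is a complete linear order whose order topology is the usual Cantor set topology, in particular $L_1$ is uncountable — and $L_2 = \{0,1\}^{\omega+1}$ with the lexicographic order, the split Cantor set. Here I use that the lexicographic order on a product $\{0,1\}^{\kappa}$ compares the first block of coordinates first, so splitting the index set $\omega\cdot 2+1$ into an initial segment of order type $\omega$ and a tail of order type $\omega+1$ realizes the whole thing as an iterated lexicographic product; and I use that $L_1$, being the usual Cantor set, is a complete linear order, while $L_2$ is complete as well (any lexicographic power $\{0,1\}^{\gamma}$ is a complete linear order). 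By the remark preceding the statement, $L_2 = \{0,1\}^{\omega+1}$ is not metrizable in its order topology, so the hypotheses of the corollary are met and $(\{0,1\}^{\omega\cdot 2+1},\tau_\prec)$ is not a Corson-to-one preimage of a metric compactum.

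Next I would handle $([0,1]^2\times\{0,1\},\tau_\prec)$ in the same spirit: take $L_1 = [0,1]$ with its usual order, which is an uncountable complete linear order whose order topology is the usual metric topology, and $L_2 = [0,1]\times\{0,1\}$ with the lexicographic order, the split interval, which is a complete linear order that is not metrizable in the order topology (again by the remark just before the statement). Then $([0,1]^2\times\{0,1\},\tau_\prec) \cong L_1\times L_2$ as linearly ordered spaces, with the order topology on the product agreeing with the order topology induced by the big lexicographic order, so the corollary applies verbatim and gives the conclusion.

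The only genuinely nontrivial points — really bookkeeping rather than obstacles — are: verifying that the order topology on an iterated lexicographic product $L_1\times L_2$ coincides with the order topology on the ``flattened'' lexicographic product of the full index set (so that the homeomorphism type claimed is correct), and confirming completeness of the relevant factors $\{0,1\}^\gamma$ and $[0,1]\times\{0,1\}$, which is immediate since suprema in a lexicographic product are computed coordinatewise over an initial segment. The substantive input, namely that these split spaces fail to be Corson-to-one preimages of metric spaces, is already delivered by the preceding corollary together with the cited theorem that a linearly ordered Corson compactum is metrizable; so the proof is essentially a one-line invocation once the decompositions are spelled out.
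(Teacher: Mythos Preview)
Your proposal is correct and matches the paper's own argument: both cases are handled by writing the space as $L_1\times L_2$ with $L_1$ uncountable (the Cantor set $\{0,1\}^\omega$ or $[0,1]$) and $L_2$ a nonmetrizable complete order (the split Cantor set $\{0,1\}^{\omega+1}$ or the split interval $[0,1]\times\{0,1\}$), then invoking the preceding corollary. The paper's proof is in fact just the one-line remark before the statement, so your write-up is, if anything, more detailed than what appears there.
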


The above spaces, and the spaces in $\mathcal{RK}_0$ in which they are contained, are examples of spaces in $\mathcal{RK}\setminus\mathcal{CD}$.

\section{Continuous images} 
 
Recall that a compact space $K$ is in the class $\mathcal{CD}$ if and only if it is a pointwise compact set of functions with countably many discontinuities on a Polish space. An observation that we are going to use is that this class is closed under countable products. Indeed, if $K_n$ is a compact set of functions with countably many discontinuities on $X_n$, then $\prod_n K_n$ can be represented as the family of all functions on the disjoint topological discrete union $X$ of the $X_n$'s such that $f|_{X_n} \in K_n$ for all $n$.
 
\begin{prop}
	For every $\alpha<\omega\cdot\omega$, the compact space $\mathcal{H}_\alpha$ is a subspace of a continuous image of a separable Rosenthal compact space in the class $\mathcal{CD}$.
\end{prop}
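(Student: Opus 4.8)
The plan is to reduce to the already-established facts about $\mathcal{H}_\alpha$ and to exploit the hypothesis $\alpha<\omega\cdot\omega$, which means we may write $\alpha = \omega\cdot k + m$ for some finite $k$ and $m$, so that the ordinal $\alpha\cdot\omega$ appearing in Proposition~\ref{lexprodis} is at most $\omega\cdot\omega$; in particular $\alpha$ (and the auxiliary ordinals we build below) can be covered by an increasing $\omega$-chain of finite sets $A_n$ in which the ``new'' coordinates enter in a controlled, eventually-periodic pattern. Recall from Lemma~\ref{Halpha} that $\mathcal{H}_\alpha$ sits inside its canonical super-compactum $\overline{\mathcal{D}}$, where $\mathcal{D}=\bigcup_n\mathcal{D}_n$ and each $\mathcal{D}_n$ consists of continuous functions that depend only on the coordinates in $A_n$ (modulo the lexicographic order on $\{0,1\}^{A_n}$). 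So it suffices to produce a separable compact space $Z$ in the class $\mathcal{CD}$ together with a continuous surjection $Z\To\overline{\mathcal{D}}$.

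First I would analyze $\overline{\mathcal{D}}$ structurally. A function $g\in\overline{\mathcal{D}}$ is, by the proof of Lemma~\ref{Halpha}, of one of two kinds: either it lies in some $\mathcal{D}_n$ (hence is continuous and factors through the finite cube $\{0,1\}^{A_n}$), or it lies in $\mathcal{H}_\alpha$. The continuous part is harmless: the functions factoring through a fixed finite cube form a metrizable compact piece, and there are only countably many $n$, so these contribute a $\sigma$-compact metrizable (hence trivially $\mathcal{CD}$, even continuous) layer. The genuine content is covering $\mathcal{H}_\alpha$ itself by a continuous image of a separable space in $\mathcal{CD}$; then one takes the topological (one-point or disjoint-union-with-a-limit) amalgamation of this with the metrizable layers and checks the result is still separable and still $\mathcal{CD}$, using that $\mathcal{CD}$ is closed under countable products (noted at the start of Section~4) and, routinely, under such glueings and under passing to closed subspaces.

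For the main step — covering $\mathcal{H}_\alpha$ when $\alpha<\omega\cdot\omega$ — I would build an explicit Polish-space model. Write the coordinate set $\alpha$ as a finite disjoint union of ``columns'' each order-isomorphic to $\omega$ (this is exactly where $\alpha<\omega\cdot\omega$ is used). A monotone function $f:\{0,1\}^\alpha\To[0,1]$ is determined by where it jumps, i.e.\ essentially by a monotone assignment from the lexicographic line $\{0,1\}^\alpha$ to $[0,1]$; after the standard homeomorphism $\{0,1\}^\alpha\approx$ (a lexicographic product of finitely many split-Cantor-type factors) one can encode such an $f$ by finitely many monotone step-data, each of which is a function on a split-interval-type space having only countably many discontinuities. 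Concretely: represent $\mathcal{H}_{\omega}$ (one column) first as a pointwise compact set of functions with countably many discontinuities on a Polish space — here the key computation is that a monotone $[0,1]$-valued function on the lexicographic $\{0,1\}^\omega$, viewed on a suitable Polish domain, has its discontinuity set contained in a fixed countable set (the dyadic-type points), independently of $f$ — and then build $\mathcal{H}_\alpha$ for $\alpha<\omega\cdot\omega$ as a closed subspace of a finite product of such models, invoking closure of $\mathcal{CD}$ under finite products. Separability of the model is arranged by including the (countably many) continuous members coming from the $\mathcal{D}_n$'s as a dense set, exactly as in Lemma~\ref{Halpha}.

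The main obstacle I anticipate is the second sentence of the previous paragraph: showing that $\mathcal{H}_\omega$ (equivalently $\mathcal{H}_\alpha$ for finite-column $\alpha$) genuinely lies in $\mathcal{CD}$, i.e.\ finding a \emph{single} Polish domain $X$ and a pointwise-compact representation of (a space mapping onto) $\mathcal{H}_\omega$ in which \emph{every} function has only countably many discontinuities, with the discontinuity sets not spreading out as $f$ varies. The danger is that the natural domain is the lexicographic line itself, which is non-metrizable; one must instead pass to a Polish ``resolution'' of it (roughly, split the dyadic points and interpolate), at the cost of replacing $\mathcal{H}_\omega$ by a continuous preimage — which is precisely why the statement only claims ``subspace of a continuous image'' rather than membership. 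Making that resolution precise, and verifying the discontinuity count is uniformly countable and that the resulting function space is pointwise compact and separable, is the technical heart; the ordinal bound $\alpha<\omega\cdot\omega$ is what keeps the number of columns finite and hence lets a finite product argument finish the job without leaving the class $\mathcal{CD}$.
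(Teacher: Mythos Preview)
Your outline has a genuine gap at the crucial step, and it is precisely the step where the paper does real work.

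You correctly observe that $\mathcal{H}_\omega$ already lies in $\mathcal{CD}$: the product topology on $\{0,1\}^\omega$ coincides with the lexicographic order topology, and a monotone $[0,1]$-valued function on a separable complete order has at most countably many discontinuities. The problem is your next move: ``build $\mathcal{H}_\alpha$ for $\alpha<\omega\cdot\omega$ as a closed subspace of a finite product of such models''. This does not work as stated. A lex-monotone $h:\{0,1\}^{\omega\cdot 2}\to[0,1]$ is \emph{not} determined by a pair of monotone functions on the two $\omega$-columns; on each slice $\{x\}\times\{0,1\}^\omega$ it restricts to a possibly different element of $\mathcal{H}_\omega$, and there are continuum many slices. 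So the natural ``coordinates'' of $h$ live in an uncountable product of copies of $\mathcal{H}_\omega$, not a finite one, and there is no evident continuous map from $\mathcal{H}_\omega\times\mathcal{H}_\omega$ (or any finite power) onto a space containing $\mathcal{H}_{\omega\cdot 2}$. Your proposal gives no candidate for such a map, and I do not see one.

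The paper circumvents exactly this obstruction by a two-stage reduction you are missing. First it restricts to finite range: for a fixed finite $F\subset[0,1]$ one looks at $\mathcal{H}[\alpha,A_\ast,F]$, the closure of the $F$-valued continuous layers. A monotone $F$-valued function has only finitely many level sets, so its ``slice data'' is finite, and one can indeed write every $h\in\mathcal{H}[\beta+\gamma,A_\ast,F]$ as $\sigma\circ(f\boxtimes g)$ with $f\in\mathcal{H}[\beta,B_\ast,G]$, $g\in\mathcal{H}[\tilde\gamma,\Gamma_\ast,G]$ for a slightly larger finite $G$ and some $\sigma:G\times G\to F$. This gives the inductive step on ordinal sums and is where the bound $\alpha<\omega\cdot\omega$ is actually used. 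Second, the full $[0,1]$-valued space is recovered from the finite-range ones by a dyadic approximation: one takes a closed subset $L$ of the countable product $\prod_n\mathcal{H}[\alpha,A_\ast,F_n]$ (with $F_n$ the dyadics of level $n$) and maps it onto $\mathcal{H}[\alpha,A_\ast,[0,1]]$ by $(f_n)\mapsto\lim_n f_n$. Closure of $\mathcal{CD}$ under countable products then finishes the argument. Your sketch collapses these two stages into a single ``finite product of $\mathcal{H}_\omega$'s'' step, and that collapse is where it breaks.
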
 

\begin{proof}
	We will need some extra notation to deal with the spaces described in Lemma~\ref{Halpha}. Given $\alpha<\omega_1$, $B\subset \alpha$ and $F\subset [0,1]$, let
	$$\mathcal{D}[\alpha,B,F] = \{f:\{0,1\}^\alpha\To F : x|_B \preceq y|_B \Rightarrow f(x)\leq f(y)\}.$$
	If $A_\ast = (A_n)_{n<\omega}$ is a tower of finite sets $A_0\subset A_1\subset\cdots$ such that $\bigcup A_n = \alpha$, then we define
	$$\mathcal{H}[\alpha,A_\ast,F] = \overline{\bigcup_{n<\omega}\mathcal{D}[\alpha,A_n,F]} = \bigcup_{n<\omega}\mathcal{D}[\alpha,A_n,\overline{F}] \cup \mathcal{D}[\alpha,\alpha,\overline{F}].$$
	
	The second equality was checked inside the proof of Lemma~\ref{Halpha}. Recall also that $\mathcal{H}[\alpha,A_\ast,[0,1]]$ belongs to $\mathcal{RK}_0$ and is therefore separable. It is enough to prove  that $\mathcal{H}[\alpha,A_\ast,[0,1]]$ is a continuous image of a compact space in the class $\mathcal{CD}$. Let us prove first that $\mathcal{H}[\alpha,A_\ast,F]$ is a continuous image of a space in $\mathcal{CD}$ when $F$ is finite. When $\alpha<\omega$, then all functions in  $\mathcal{H}[\alpha,A_\ast,F]$ are continuous and we are done. When $\alpha=\omega$, then the functions in $\mathcal{H}[\alpha,A_\ast,F]$ are either continuous or lexicographically monotone on $\{0,1\}^\omega$, so they have finitely many points of discontinuity and we are done again. For the rest of ordinals $\alpha<\omega\cdot\omega$, it is enough to check that if the result holds for $\beta$ and $\gamma$, then it also holds for the ordinal sum $\alpha=\beta+\gamma$. It is convenient to consider $\tilde{\gamma} =\{\beta+\xi : \xi<\gamma\}$ instead of $\gamma$. We take $G\subset [0,1]$ a finite set whose cardinality is $|G|= 2|F|+2$. Given $A_\ast$ a tower of finite sets that covers $\alpha$, we take $B_\ast = (A_n \cap \beta)_{n<\omega}$, which covers $\beta$, and $\Gamma_\ast = (A_n \cap \tilde{\gamma})_{n<\omega}$, which covers $\tilde{\gamma}$. For $f\in\mathcal{H}[\beta,B_\ast,G]$ and $g\in\mathcal{H}[\tilde{\gamma},\Gamma_\ast,G]$, we consider the function $f\boxtimes g: \{0,1\}^{\beta+\gamma}\to G\times G$ given by $f\boxtimes g (x) = (f(x|_{\beta}),g(x|_{\tilde{\gamma}}))$. We claim that
	$$(\star)\ \mathcal{H}[\alpha,A_\ast,F] \subset \set{\sigma\circ (f\boxtimes g) : f\in\mathcal{H}[\beta,B_\ast,G], g\in\mathcal{H}[\tilde{\gamma},\Gamma_\ast,G], \sigma:G\times G\To F}.$$
	For the proof of $(\star)$, it is convenient that we call $A_\omega = \alpha$. We take $h\in \mathcal{H}[\alpha,A_\ast,F]$, so that $h\in \mathcal{D}[\alpha,A_n,F]$ for some $n\leq\omega$. By montonicity, there is a partition into at most $|F|$ consecutive lexicographical intervals of $\{0,1\}^{A_n}$ so that $h$ is constant for all $x$ with $x|_{A_n}$ in one of those intervals. Let $v_1\prec\cdots\prec v_p$ in $\{0,1\}^{B_n}$ and $w_1\prec\cdots\prec w_q$ in $\{0,1\}^{\Gamma_n}$ be the respective restrictions of the endpoints of those intervals, enumerated increasingly and without repetition. We can find $f\in \mathcal{D}[\beta,B_n,G]$ whose fibers $f^{-1}(k)$, $k\in G$, are exactly the sets of the form  $(v_i,v_j)\times\{0,1\}^{\beta\setminus B_n}$ or $\{v_i\}\times\{0,1\}^{\beta\setminus B_n}$ (or empty). Here the brackets mean lexicographic open intervals. 	Similarly we can take $g\in \mathcal{D}[\tilde{\gamma},\Gamma_n,G]$  whose fibers $g^{-1}(k)$, $k\in G$, are exactly the sets of the form  $(w_i,w_j)\times\{0,1\}^{\tilde{\gamma}\setminus \Gamma_n}$ or $\{w_i\}\times\{0,1\}^{\tilde{\gamma}\setminus \Gamma_n}$ (or empty). Notice that $h$ is constant on each of the fibers of $f\boxtimes g$, so we can express $h=\sigma\circ (f\boxtimes g)$ as desired. This finishes the proof of $(\star)$.
	
	For a fixed $\sigma$, the assignment $(f,g)\mapsto \sigma\circ (f\boxtimes g)$ is continuous, and the above shows that $\mathcal{H}[\alpha,A_\ast,F]$ is covered with finitely many continuous images of the product space $\mathcal{H}[\beta,B_\ast,G]\times \mathcal{H}[\tilde{\gamma},\Gamma_\ast,G]$. So $\mathcal{H}[\alpha,A_\ast,F]$ will be a continuous image of a separable space in $\mathcal{CD}$ if both $\mathcal{H}[\beta,B_\ast,G]$ and  $\mathcal{H}[\tilde{\gamma},\Gamma_\ast,G]$ are so.  
	Finally, we deal with the case of the full $\mathcal{H}[\alpha,A_\ast,[0,1]]$. For every $n<\omega$ consider the set of dyadic rationals $F_n = \{k/2^n : k=0,1,\ldots,2^n\}$, and
	$$ L =\left\{f_\ast = (f_n)_{n<\omega}\in \prod_{n<\omega}\mathcal{H}[\alpha,A_\ast,F_n] : \forall x \ \forall n \ |f_n(x)-f_{n+1}(x)|\leq \frac{1}{2^{n+1}}  \right\}. $$ 
	
This set $L$ is a closed subspace of the countable product $\prod_{n<\omega}\mathcal{H}[\alpha,A_\ast,F_n]$, so it is a continuous image of a member of the class $\mathcal{CD}$. We define a function $\phi: L\to \mathcal{H}[\alpha,A_\ast,[0,1]]$ by $\phi((f_n)_{n<\omega}) = \lim_n f_n$. Notice that $\phi$ is continuous because for every $x\in\{0,1\}^\alpha$, the function $L\To \mathbb{R}$ given by $f_\ast\mapsto \phi(f)(x)$ is the uniform limit of the functions $f_\ast \mapsto f_n(x)$. Another remark is that $\phi$ is onto. In fact, if $g\in \mathcal{H}[\alpha,A_\ast,[0,1]]$, and we define $f_n(x) = \inf\{a\in F_n : g(x)\leq a\}$, then $f_\ast=(f_n)_{n<\omega} \in L$ and $\phi(f_\ast) = g$. This shows that $\mathcal{H}[\alpha,A_\ast,[0,1]]$ is a continuous image of a space from $\mathcal{CD}$.
\end{proof}
 
It follows from the proof of Proposition~\ref{lexprodis} that the lexicographic products $[0,1]^\alpha$, for $\alpha<\omega\cdot\omega$, are subspaces of continuous images of separable spaces in $\mathcal{CD}$. We do not know what happens for larger countable ordinals.

\end{document}